\title{Measures Determined by the Restriction of Convolution Powers to the Proper Concave Cone}
\author{Aleksander Pawlewicz}
\date{February 2022}
\newtheorem{theorem}{Theorem}
\newtheorem{lemma}[theorem]{Lemma}
\newtheorem{remark}{Remark}
\newcommand{\supp}{\mathrm{supp}\,}
\newcommand{\Supp}{\mathrm{supp_C}\,}
\newcommand{\conv}{\mathrm{conv}\,}
\begin{document}

\maketitle

\begin{abstract}
    Let $\mu$ and $\nu$ be two non-degenerate finite signed Borel measures defined on a proper convex cone of $\mathbb{R}^n$. We prove that if all convolution powers of $\mu$ and $\nu$ are appropriately equal (and non-zero) on a proper concave cone of $\mathbb{R}^n$, the measures are equal. A similar but more general result for measures defined on $\mathbb{R}$ can be found in \cite{Kwasnicki}. We also provide an example of two-dimensional measures, which indicates that equality of measures and their appropriate convolution powers on a half-plane is not enough for equality of measures.
\end{abstract}

\section{Introduction}

In this article, we study some conditions connected with the convolution powers of finite signed Borel measures which uniquely determine a measure. The inspiration to write this article was paper \cite{Kwasnicki}, in which, a stronger one-dimensional version of Theorem \ref{Maintheorem} was presented. Also, a brief description of the history of the problem can be found there. There are a few other important articles devoted to matters investigated in our paper. We will mention them later. 

The key question is: 
\newline

How much should we know of a measure in order to determine this measure uniquely? 
\newline

However, the above question is to general to be considered.  Hence, we not only assume that we know the behaviour of a measure on some part of the space, but we also know the behaviour of all the convolution powers of the measure in this region.

The question of A. N. Kolmogorov from the 1950s (see the beginning of section 1 in \cite{OsUl}) can be considered to be the starting point of systematic research in this direction. Paper \cite{OsUl} contains, in fact, a general overview of the matters connected with the unique determination of probability distributions and Borel measures in the spirit of our article. There is also formulated not true theorem similar to Theorem \ref{Maintheorem} (see Theorem 4 in \cite{OsUl}), but without proof, which state that it is enough to assume the equality of measures and appropriate convolution powers of measures on a half-space in order to obtain equality of the measures. We will not give a detailed description of the literature of the subject. We mention only an interesting but not very recent book \cite{analytic} on general analytic methods in probability theory. Also, the literature mentioned in \cite{Kwasnicki} could be recommended. 

Although paper \cite{Kwasnicki} relies on analytic methods, we try to deal directly with measures and a convolution operator. We analyse consecutive convolution powers of measures and their supports. This line of investigation of similar problems was previously used in paper \cite{Ost}. 

We would also like to recall the remarkable Titchmarsh convolution theorem first formulated in paper \cite{Tit}. This theorem is the key 'ingredient' in our considerations. There is extensive literature devoted to this theorem and its different proof methods, see \cite{Gar} and references mentioned there. One of its general forms can be stated as follows (see \cite{Wei}):

\begin{theorem}
\label{Titchmarsh1}
Let $\mu_1$ and $\mu_2$ be finite with compact support measures on locally compact abelian group $G$. If $G$ has no compact subgroups then $$\conv\left(\supp(\mu_1*\mu_2)\right)=\conv\left(\supp(\mu_1)\right)+\conv\left(\supp(\mu_2)\right),$$
where $\conv(A)$ is the convex closure of a set $A$.
\end{theorem}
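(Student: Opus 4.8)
The plan is to prove the two inclusions separately. The inclusion ``$\subseteq$'' is soft; for ``$\supseteq$'' I would reduce the group $G$ to a Euclidean space by its structure theory and then reduce the Euclidean statement to the classical one-dimensional Titchmarsh theorem by projecting in every direction after twisting by characters. (The statement is classical---a proof is contained in \cite{Wei}---but a self-contained route is worth sketching.)

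For ``$\subseteq$'': from $|\mu_1*\mu_2|\le|\mu_1|*|\mu_2|$ and the fact that $|\mu_1|\times|\mu_2|$ is concentrated on $\supp\mu_1\times\supp\mu_2$, every point outside a neighbourhood of $\supp\mu_1+\supp\mu_2$ is a null point of $\mu_1*\mu_2$, so $\supp(\mu_1*\mu_2)\subseteq\overline{\supp\mu_1+\supp\mu_2}$. Since the supports are compact the closure is redundant, and $\supp\mu_1+\supp\mu_2$ already lies inside the compact convex set $\conv(\supp\mu_1)+\conv(\supp\mu_2)$; taking convex closures finishes this half.

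For ``$\supseteq$'' I would first reduce to $G=\mathbb{R}^N$. By the structure theorem for locally compact abelian groups, $G\cong\mathbb{R}^a\times H$ with $H$ having a compact open subgroup; the absence of nontrivial compact subgroups forces that subgroup to be trivial, so $H$ is discrete and torsion-free (a torsion element would generate a finite, hence compact, subgroup). A compact set meets only finitely many cosets, so $\supp\mu_1\cup\supp\mu_2$---hence also $\supp(\mu_1*\mu_2)$---lies in $\mathbb{R}^a\times D_0$ for a finitely generated, therefore free, subgroup $D_0\cong\mathbb{Z}^b$ of $H$. The closed subgroup $\mathbb{R}^a\times\mathbb{Z}^b$ embeds as a closed subgroup of $\mathbb{R}^{a+b}$, and convolution, support and convex closure are all inherited from the ambient $\mathbb{R}^{a+b}$, so it suffices to treat compactly supported measures on $\mathbb{R}^N$, $N:=a+b$. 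There both sides are compact convex sets, hence equal once their support functions agree in each direction $u$; writing $a_i:=\sup\{\langle x,u\rangle:x\in\supp\mu_i\}$ and using ``$\subseteq$'' for one inequality, the goal becomes $\sup\{\langle x,u\rangle:x\in\supp(\mu_1*\mu_2)\}=a_1+a_2$. Rotating so that $u=e_N$ and splitting $x=(x',x_N)$, for $\xi'\in\mathbb{R}^{N-1}$ I would let $\sigma_i^{\xi'}$ be the pushforward to $\mathbb{R}$, under $x\mapsto x_N$, of the complex measure $e^{i\langle\xi',x'\rangle}\,d\mu_i(x)$. Since $e^{i\langle\xi',\cdot\rangle}$ is a character, $\sigma_1^{\xi'}*\sigma_2^{\xi'}$ is precisely the corresponding pushforward of $e^{i\langle\xi',x'\rangle}\,d(\mu_1*\mu_2)$, whence $\sup\supp(\sigma_1^{\xi'}*\sigma_2^{\xi'})\le\sup\{\langle x,e_N\rangle:x\in\supp(\mu_1*\mu_2)\}$, while the one-dimensional Titchmarsh theorem for finite complex measures on $\mathbb{R}$ (\cite{Tit, Gar}) gives $\sup\supp(\sigma_1^{\xi'}*\sigma_2^{\xi'})=\sup\supp\sigma_1^{\xi'}+\sup\supp\sigma_2^{\xi'}$ whenever both factors are non-zero. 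The remaining point is that $\sup\supp\sigma_i^{\xi'}=a_i$ for all $\xi'$ outside a Lebesgue-null set: if $\sigma_i^{\xi'}$ vanished on $(a_i-\delta,\infty)$ then $\widehat{\rho_B}(\xi')=0$ for every interval $B\subseteq(a_i-\delta,\infty)$, where $\rho_B$ denotes the pushforward to $\mathbb{R}^{N-1}$ of $\mathbf{1}_{\{x_N\in B\}}\,\mu_i$; each $\widehat{\rho_B}$ is entire, so its real zero set is null unless $\rho_B\equiv0$, and if all $\rho_B$ over a countable generating family of intervals vanished then $\mu_i$ would vanish on $\{x_N>a_i-\delta\}$, contradicting the definition of $a_i$. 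Picking one $\xi'$ good for both indices then yields $\sup\{\langle x,e_N\rangle:x\in\supp(\mu_1*\mu_2)\}\ge a_1+a_2$, which is ``$\supseteq$''.

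The hard part is exactly what the genericity step above handles: for signed or complex measures the plain projection of $\mu_i$ onto a line can lose its extreme mass to cancellation, so the one-dimensional theorem cannot be applied to it directly, and inserting the characters $e^{i\langle\xi',\cdot\rangle}$ and discarding a null set of bad $\xi'$ is what repairs this. A secondary difficulty is the group-theoretic bookkeeping (choosing the compactly generated closed subgroup carrying the supports and checking that the three operations descend to it); one also has to recall why a compact subgroup is fatal---on $\mathbb{Z}/3\mathbb{Z}$, for instance, $(\delta_0-\delta_1)*(\delta_0+\delta_1+\delta_2)=0$ with neither factor zero---which shows the hypothesis on $G$ cannot be dropped.
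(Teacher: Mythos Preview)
The paper does not give its own proof of this theorem; it is quoted verbatim from \cite{Wei} as background for the cone version (Theorem~\ref{Titchmarsh2}) that is actually used later. So there is no argument in the paper to compare yours against.

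That said, your sketch is a sound and essentially standard route. The easy inclusion is correct as written; the structure-theoretic reduction to $\mathbb{R}^N$ is the usual one; and the core idea---twisting by the characters $e^{i\langle\xi',\cdot\rangle}$ before projecting to a line, so that the one-dimensional Titchmarsh theorem can be applied without losing the extreme mass to cancellation, and then discarding a Lebesgue-null set of bad $\xi'$ via the real-analyticity of $\widehat{\rho_B}$---is exactly the mechanism behind the Euclidean proofs in the literature (it is close in spirit to Lions' argument). Your final example on $\mathbb{Z}/3\mathbb{Z}$ correctly isolates why the ``no compact subgroups'' hypothesis is needed.

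One small point worth making explicit when you write this up in full: you invoke that ``convex closure'' is inherited from the embedding $\mathbb{R}^a\times\mathbb{Z}^b\hookrightarrow\mathbb{R}^{a+b}$, but the paper (like \cite{Wei}) does not spell out what $\conv$ means in a general locally compact abelian group. Under the natural definition via closed half-spaces coming from real characters one has $\conv_G(A)=\conv_{\mathbb{R}^{a+b}}(A)\cap G$, and then your descent is immediate; it just deserves one line so the reader sees why the embedding step is harmless for both sides of the identity.
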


In paper \cite{Wei} just after the proof of Theorem \ref{Titchmarsh1} the author indicated that the stronger version of Titchmarsh theorem is true with the minor changes of the proof of Theorem \ref{Titchmarsh1}. Actually, we will need this stronger version.

\begin{theorem}
\label{Titchmarsh2}
Let $a$ and $b$ be two non-degenerate finite signed Borel measures with supports contained in a strictly convex cones of $\mathbb{R}^n$ such that
$$\Supp a=k \mbox{ and } \Supp b=l.$$
Then we have
$$\Supp (b*a)=k+l.$$
\end{theorem}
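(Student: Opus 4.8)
The plan is to deduce the statement from the convex-closure form of the Titchmarsh theorem (Theorem \ref{Titchmarsh1}), after rephrasing everything in terms of convex closures of supports. For the fixed strictly convex cone $C$ write $x\leq_C y$ when $y-x\in C$, and for a set $S$ contained in some translate of $C$ put $m(S)=\{p:S\subseteq p+C\}=\bigcap_{x\in S}(x-C)$. Since each set $p+C$ is closed and convex, $m(S)=m(\conv S)$, and (this may also be taken as the definition of $\Supp$) $\Supp\mu$ is the $\leq_C$-greatest element of $m(\supp\mu)=m(\conv\supp\mu)$. It is precisely strict convexity of $C$, together with non-degeneracy of $\mu$, that makes this greatest element exist and be a single point; I shall use it only through the two consequences $\conv\supp\mu\subseteq\Supp\mu+C$ and $m(\conv\supp\mu)\subseteq\Supp\mu-C$.

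First I would record two elementary facts. (i) Since $\supp(b*a)\subseteq\overline{\supp b+\supp a}\subseteq(\Supp b+C)+(\Supp a+C)=(k+l)+C$ (using $C+C=C$ and that $C$ is closed), we get $\conv\supp(b*a)\subseteq(k+l)+C$, i.e.\ $k+l\in m(\conv\supp(b*a))$. (ii) For any nonempty sets $S_1,S_2$ one has $m(S_1+S_2)=\bigcap_{x\in S_1}\big(x+m(S_2)\big)$, straight from the definition of $m$.

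Next I would apply Theorem \ref{Titchmarsh1}, in the form valid for non-degenerate signed measures whose supports merely lie in a cone rather than being compact (the stronger version announced just after Theorem \ref{Titchmarsh1} in \cite{Wei}; alternatively one obtains it from the compact case by restricting $a$ and $b$ to large balls and letting the radius tend to infinity), to get $\conv\supp(b*a)=\conv\supp b+\conv\supp a=:K_b+K_a$. Feeding $m(K_a)\subseteq\Supp a-C$ and $m(K_b)\subseteq\Supp b-C$ into (ii),
$$m(K_b+K_a)=\bigcap_{x\in K_b}\big(x+m(K_a)\big)\subseteq\bigcap_{x\in K_b}\big(x+\Supp a-C\big)=\Supp a+m(K_b)\subseteq\Supp a+\Supp b-C,$$
so every element of $m(\conv\supp(b*a))$ is $\leq_C\Supp b+\Supp a=k+l$. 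Together with (i), which says that $k+l$ itself belongs to $m(\conv\supp(b*a))$, this shows $k+l$ is the $\leq_C$-greatest element of $m(\conv\supp(b*a))$; hence $\Supp(b*a)$ is well defined and equals $k+l$.

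I expect the real work, and the main obstacle, to be the appeal to Theorem \ref{Titchmarsh1}: one needs the convex-closure identity for signed measures whose supports are only contained in a cone, not compact. This can be handled either by re-running the proof of Theorem \ref{Titchmarsh1} with the minor modifications indicated in \cite{Wei}, the role of compactness there being played by the finiteness of the lower support function $h_S(u)=\inf_{x\in S}\langle u,x\rangle$ for $u$ in the dual cone $C^{*}$, or by a truncation argument, for which one must verify that $\Supp a_R\to\Supp a$, $\Supp b_R\to\Supp b$ and $a_R*b_R\to a*b$ as the truncation radius $R\to\infty$. A secondary point, needed for $\Supp(b*a)$ to be meaningful at all, is that $b*a\neq 0$: after translating the supports into $C$, the Laplace transforms $\widehat a(\xi)=\int e^{-\langle\xi,x\rangle}\,da(x)$ and $\widehat b(\xi)$ are finite and real-analytic for $\xi$ in the interior of $C^{*}$, satisfy $\widehat{b*a}=\widehat b\,\widehat a$ there, and a product of two real-analytic functions that are not identically zero on a connected open set is again not identically zero, so non-degeneracy of $a$ and $b$ forces $b*a\neq 0$.
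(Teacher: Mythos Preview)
The paper does not give a proof of Theorem~\ref{Titchmarsh2}: it is stated immediately after Theorem~\ref{Titchmarsh1} with the remark that, according to \cite{Wei}, this stronger version follows from the proof of Theorem~\ref{Titchmarsh1} ``with the minor changes''. So there is no argument in the paper to compare yours against; you are supplying strictly more than the author does, and your identification of the non-compact extension of Theorem~\ref{Titchmarsh1} as the only substantive step matches exactly what the paper outsources to \cite{Wei}.

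One caveat. You silently replace the paper's $\Supp\mu$, which is the \emph{real number} $\inf\{t\in\mathbb{R}:\supp\mu\subset C+te_1\}$, by the $\leq_C$-greatest \emph{point} of $m(\supp\mu)\subset\mathbb{R}^n$, saying ``this may also be taken as the definition''. These notions do not coincide in general, and the discrepancy is not cosmetic: for the round cone $C=\{x:x_1\le -\lVert(x_2,\dots,x_n)\rVert\}$ and the point masses $a=\delta_{(-1,1,0,\dots,0)}$, $b=\delta_{(-1,-1,0,\dots,0)}$ one has $\Supp a=\Supp b=0$ in the paper's scalar sense, yet $b*a=\delta_{(-2,0,\dots,0)}$ and $\Supp(b*a)=-2\neq 0$. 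Your point-valued formulation is the one that genuinely follows from the convex-hull Titchmarsh identity, and your derivation of it is sound; but it is a different statement from the one printed, so the parenthetical ``this may also be taken as the definition'' is where your write-up and the paper's conventions part ways.
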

For definition of $\Supp$ see below.
Finally, we would just like to mention that there are connections between the one-dimensional version of measure theory problems described in this paper and the theory of random walks, see \cite{Kwasnicki}.

Let us make one important technical assumption.

\begin{remark}
All sets will be measurable with respect to the measures considered in a particular place.
\end{remark}

Now, let us introduce notation. For $p\in\mathbb{R}$ we will use the symbol $\mathbb{R}_p^n$ to denote the left half-space,
$$\mathbb{R}_p^n=\left\{x=(x_1, x_2, ..., x_n)\in\mathbb{R}^n|\, x_1\leq p\right\},$$
and also
$$\mathbb{R}_+^n=\left\{x=(x_1, x_2, ..., x_n)\in\mathbb{R}^n|\, x_1>0\right\},$$

We will denote by $\mathcal{C}$ a collection of the strictly convex closed $n$-dimensional cones with vertex at $0$, symmetric with the $Ox_1$ axe and such that $C\cap\mathbb{R}_+^n=\emptyset$ for $C\in\mathcal{C}$.
Also, we define
$$C(p)=C+\{(p,0, ...,0)\},$$
for some $C\in\mathcal{C}$ and $p\in\mathbb{R}$, where $C+\{(p,0, ...,0)\}$ means the Minkowski sum of the sets $C$ and $\{(p,0, ...,0)\}$.

The support of the finite signed Borel measure $\mu$ on $\mathbb{R}^n$, $\mbox{supp}\,\mu$, is the smallest closed set $A$ such that the total variation of measure $\mu$ fulfil the condition
$$|\mu|(\mathbb{R}^n\setminus A)=0.$$

For the non-degenerate finite signed Borel measure $\mu$, cone $C\in\mathcal{C}$ and a real number $p$, we will denote $$\Supp\mu=p\,\,\iff\,\,p=\inf\left\{t\in\mathbb{R}|\, \supp \mu\subset C(t)\right\}.$$
Thus, the equality $\Supp\mu=p$ means that there is some amount of measure $\mu$ concentrated arbitrarily close to the conical surface of the cone $C(p)$ inside the cone $C(p)$. Because the support of a measure $\mu$ and sets $C(t)$ are closed, the above infimum is attained.

We define the measure $\mu*\nu$ --- the convolution of finite signed Borel measures $\mu$ and $\nu$ --- by the standard formula
$$\mu*\nu(A)=\int_{\mathbb{R}^n}\mu(A-y)\,d\nu(y),$$
for every measurable set $A\subset\mathbb{R}^n$.
The measure $\mu*\nu$ obtained in this way is also a finite signed Borel measure and the convolution operation, $(\mu,\nu)\mapsto\mu*\nu$, is associative and commutative. 

We will use the following convention for the finite signed Borel measure $\mu$:
$$\mu^{*1}=\mu \mbox{ and }\mu^{*(k+1)}=\mu*\mu^{*k}.$$

\section{The main result}

In this article, we prove the following statement.

\begin{theorem}
\label{Maintheorem}
Let $C \in \mathcal{C}$ be a strictly convex cone and let $\mu$ and $\nu$ be finite signed Borel measures supported in $C(h)$ for some $h>0$.
Moreover, let us assume that for every $k=1, 2, 3, ...$ we have
\begin{equation}
\label{rownanie1}
\mu^{*_k}(A)=\nu^{*_k}(A),
\end{equation}
for every measurable set $A\subset\mathbb{R}^n\setminus C$. Also, restrictions of the measures $\mu$ and $\nu$ to the set $\mathbb{R}^n\setminus C$ are non-zero measures.
Then $\mu=\nu$.
\end{theorem}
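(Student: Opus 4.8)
The plan is to prove that the signed measure $\rho:=\mu-\nu$ is identically zero. Specialising \eqref{rownanie1} to $k=1$ shows that $\mu$ and $\nu$ coincide on every measurable subset of $\mathbb{R}^n\setminus C$, hence $|\rho|(\mathbb{R}^n\setminus C)=0$ and so $\supp\rho\subset C$. The same reasoning applied to a general $k$ gives $|\mu^{*_k}-\nu^{*_k}|(\mathbb{R}^n\setminus C)=0$, so $T_k:=\mu^{*_k}-\nu^{*_k}$ is supported in $C=C(0)$ and consequently $\Supp T_k\le 0$ whenever $T_k\ne 0$. Finally, because the restriction of $\nu$ to $\mathbb{R}^n\setminus C$ is non-zero we have $\supp\nu\not\subset C(0)$, and since the infimum in the definition of $\Supp$ is attained this forces $q:=\Supp\nu>0$ (and likewise $\Supp\mu>0$).

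Assume now, for contradiction, that $\rho\ne 0$; suppose also, for the moment, that $\rho$ is non-degenerate, and put $r:=\Supp\rho$. As $\supp\rho\subset C(0)$ we have $r\le 0<q$. Expanding $\mu=\nu+\rho$ and using bilinearity, commutativity and associativity of the convolution,
$$T_k=\rho^{*_k}+\sum_{m=1}^{k-1}\binom{k}{m}\,\nu^{*_{k-m}}*\rho^{*_m}.$$
Fix $k\ge 2$. By repeated use of Theorem~\ref{Titchmarsh2}, each summand has a well-defined value of $\Supp$: the term $\nu^{*_{k-m}}*\rho^{*_m}$ has $\Supp$ equal to $(k-m)q+mr$ for $1\le m\le k-1$, and $\Supp\rho^{*_k}=kr$. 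These $k$ numbers are strictly decreasing, the successive gap being $q-r>0$; hence there is a unique summand of maximal $\Supp$, namely $k\,\nu^{*_{k-1}}*\rho$, whose $\Supp$ equals $(k-1)q+r$, while every other summand is supported in the strictly smaller cone $C\left((k-2)q+2r\right)$.

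The strict gap at the top prevents any cancellation of leading terms. Indeed, for any $t$ with $(k-2)q+2r<t<(k-1)q+r$ the open set $\mathbb{R}^n\setminus C(t)$ receives no mass from the summands with $m\ge 2$ (nor from $\rho^{*_k}$), whereas $k\,\nu^{*_{k-1}}*\rho$ puts positive total variation there; therefore $T_k\ne 0$ and, since moreover $\supp T_k\subset C\left((k-1)q+r\right)$, we get $\Supp T_k=(k-1)q+r$. Combined with $\Supp T_k\le 0$ this yields $(k-1)q+r\le 0$ for every $k\ge 2$, which is impossible for large $k$ because $q>0$. Hence $\rho=0$, that is, $\mu=\nu$.

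The step I expect to be the main obstacle is the use of Theorem~\ref{Titchmarsh2}, whose hypothesis is non-degeneracy. One has to check that convolution powers of a non-degenerate measure remain non-degenerate and, more importantly, to deal with the possibility that $\rho=\mu-\nu$ is non-zero yet concentrated on a proper affine subspace. In that case one should either recover the additivity of $\Supp$ from the unrestricted Titchmarsh theorem (Theorem~\ref{Titchmarsh1}), after a truncation argument that accounts for the non-compactness of the supports, or restrict attention to the affine hull of $\supp\rho$; this bookkeeping, rather than the support count above, is where the real effort goes.
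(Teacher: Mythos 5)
Your argument is correct and reaches the conclusion by a genuinely different route from the paper. The paper factors $\mu^{*k}-\nu^{*k}=(\mu-\nu)*\bigl(\mu^{*(k-1)}+\mu^{*(k-2)}*\nu+\dots+\nu^{*(k-1)}\bigr)$ and then needs a separate lemma (Lemma~\ref{lemat2}), proved by a delicate induction that re-invokes hypothesis \eqref{rownanie1} at every convolution order, to show that the symmetric sum has $\Supp$ equal to $(k-1)\cdot\Supp\mu$; it also treats the boundary case $\Supp(\mu-\nu)=0$ separately via Lemma~\ref{lemat1}. Your binomial expansion of $(\nu+\rho)^{*k}$ in powers of $\rho=\mu-\nu$ sidesteps both lemmas: hypothesis \eqref{rownanie1} is used only to place $\supp\rho$ and $\supp(\mu^{*k}-\nu^{*k})$ inside $C$ and to guarantee $\Supp\nu>0$, and the ``unique leading term cannot cancel'' observation replaces Lemma~\ref{lemat2} entirely, with the cases $\Supp\rho=0$ and $\Supp\rho<0$ handled uniformly rather than by a case split. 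I checked the key step: the $\Supp$ values $(k-m)q+mr$ decrease strictly in $m$ with constant gap $q-r>0$, so for $t$ strictly between the top two values the set $\mathbb{R}^n\setminus C(t)$ sees only $k\,\nu^{*(k-1)}*\rho$, and since $t$ can be taken nonnegative once $(k-1)q+r>0$, this contradicts the vanishing of $\mu^{*k}-\nu^{*k}$ off $C$; that is sound. What you give up is nothing essential: both proofs stand or fall with Theorem~\ref{Titchmarsh2}, and the caveat you raise at the end --- whether ``non-degenerate'' there means merely non-zero or excludes measures carried by a lower-dimensional set --- afflicts the paper's own proof in exactly the same way, since it too applies Theorem~\ref{Titchmarsh2} to $\mu-\nu$, about which nothing beyond non-vanishing is known. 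If anything, your version is shorter and makes the mechanism (a single dominant term forced outside $C$ for large $k$) more transparent.
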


Now, we give an example which shows why it is not enough to assume the equality of successive convolution powers on the half-plane $\mathbb{R}_+^2$. Let us denote by $F(x)$ the Fej\'er kernel,
$$F(x)=\frac{1}{2\pi}\left(\frac{\sin{x/2}}{x/2}\right)^2=\frac{1}{2\pi}\int_{-1}^1(1-|\xi|)e^{i\xi x}\,d\xi.$$
We define the measures in the following way:
$$d\mu(x,y)=F(y)d\delta_1(x)dy +F(y)\left(e^{2iy}+e^{-2iy}\right)d\delta_{-3}(x)dy$$
and
$$d\nu(x,y)=F(y)d\delta_1(x)dy +F(y)\left(e^{10iy}+e^{-10iy}\right)d\delta_{-2}(x)dy,$$
where the symbol $\delta_p$ means the Dirac delta point mass at a point $p$.
Notice that
$$\widehat{\mu^{*k}}(s,t)=
\begin{cases}
e^{-iks}\left(1-|t|\right)^k, & \text{if } t\in(-1,1),\\
e^{3iks}\left(1-|t+2|\right)^k, & \text{if } t\in(-3,-1),\\
e^{3iks}\left(1-|t-2|\right)^k, & \text{if } t\in(1,3),\\
0, & \text{otherwise},
\end{cases}$$
and
$$\widehat{\nu^{*k}}(s,t)=
\begin{cases}
e^{-iks}\left(1-|t|\right)^k, & \text{if } t\in(-1,1),\\
e^{2iks}\left(1-|t+10|\right)^k, & \text{if } t\in(-11,-9),\\
e^{2iks}\left(1-|t-10|\right)^k, & \text{if } t\in(9,11),\\
0, & \text{otherwise},
\end{cases}$$
for $k\in\mathbb{N}$. Fourier transform uniquely determines the measure, thus we have
$\mu^{*k}\vert_{\mathbb{R}_+^2}=\nu^{*k}\vert_{\mathbb{R}_+^2}$ but $\mu\neq\nu$.

In the next paragraph, we will present proof of Theorem \ref{Maintheorem} but, before that, let us formulate two auxiliary lemmas. They describe the behaviour of the supports of signed measures under the convolution operation.

\begin{lemma}
\label{lemat1}
Let $a$ and $b$ be two finite signed Borel measures on $\mathbb{R}^n$ and $C\in\mathcal{C}$. Suppose that $\supp a\subset C(h)$ and $\supp b\subset C(h)$, for some $h>0$. Assume also that
$$\Supp (b*a)\leq 0,$$
and
$$\Supp a=-p$$
for real number $p\geq 0$.
Then $\Supp b\leq p$ (or equivalently $\supp b\subset C(p)$).
\end{lemma}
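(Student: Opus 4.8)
The plan is to argue by contradiction using the stronger Titchmarsh theorem (Theorem~\ref{Titchmarsh2}) together with the hypothesis that the supports of $a$ and $b$ sit inside $C(h)$ with $h>0$. Suppose, toward a contradiction, that $\Supp b = q$ with $q > p$; since $\supp b \subset C(h)$ we know $q \le h$, so $q$ is a genuine finite real number, and similarly $\Supp a = -p$ is finite. The measures $a$ and $b$ are non-degenerate with supports in the strictly convex cone $C$ (after translation), so Theorem~\ref{Titchmarsh2} applies and gives
\begin{equation}
\label{titch-applied}
\Supp(b*a) = \Supp b + \Supp a = q + (-p) = q - p.
\end{equation}
Here I am reading the additivity of $\Supp$ in the form ``$\Supp(b*a)$ equals the sum of the shift-parameters'', which is exactly what Theorem~\ref{Titchmarsh2} asserts once one translates the cones $C(h)$ back to cones with vertex at the origin and tracks the additive constants; the symmetry of $C$ about the $Ox_1$-axis and strict convexity are what make $C$ (translated) a strictly convex cone in the sense required by that theorem.

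From \eqref{titch-applied} and the assumption $q > p$ we get $\Supp(b*a) = q - p > 0$, which directly contradicts the hypothesis $\Supp(b*a) \le 0$. Hence $q \le p$, i.e. $\Supp b \le p$, which by the definition of $\Supp$ is equivalent to $\supp b \subset C(p)$, as claimed. One edge case deserves a separate line: if $\Supp b$ is not well-defined because $b$ restricted to the relevant region is degenerate, or if $b$ fails to be non-degenerate as needed by Theorem~\ref{Titchmarsh2}, the statement must be interpreted so that the hypotheses of the lemma already exclude this; I would note explicitly that $b$ is non-degenerate (otherwise $b*a$ could be the zero measure and $\Supp(b*a)$ undefined), and that $p \ge 0$ together with $h > 0$ guarantees all the cone inclusions $C(p) \subset C(h)$ used implicitly are consistent.

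The only real subtlety — and the step I expect to need the most care — is the bookkeeping of additive constants when passing between the cones $C(t)$ (vertex at $(t,0,\dots,0)$) and the origin-vertex strictly convex cone $C$ to which Theorem~\ref{Titchmarsh2} literally applies. Concretely: if $\supp a \subset C(-p)$ with some mass arbitrarily close to the boundary of $C(-p)$, write $a = $ (translate by $(-p,0,\dots,0)$ of a measure $a_0$ with $\Supp a_0 = 0$ in the origin-cone sense), do the same for $b$, apply Theorem~\ref{Titchmarsh2} to $a_0, b_0$, and then translate back, using that convolution turns translation-by-$u$ and translation-by-$v$ into translation-by-$u+v$. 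This is routine but is the place where a sign error would invalidate the argument, so I would write it out carefully rather than wave it through. Everything else is immediate from the definition of $\Supp$ and the ordering of the parameters.
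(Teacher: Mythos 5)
Your proposal is correct and follows essentially the same route as the paper: assume $\Supp b = q > p$, apply Theorem~\ref{Titchmarsh2} to get $\Supp(b*a) = q - p > 0$, and contradict the hypothesis $\Supp(b*a)\leq 0$. The translation bookkeeping you flag as the delicate point is in fact already absorbed into the statement of Theorem~\ref{Titchmarsh2}, which is phrased directly in terms of the shift parameters $\Supp$, so no extra care is needed there.
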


\begin{proof}
Assume, by contradiction, that $\supp b\not\subset\mathbb C(p)$. Then $\Supp b=p+c$, for some $c>0$. Now, let us consider two cases. If $\Supp a=-p$, then by Titchmarsh theorem for measures supported in a strictly convex cone (Theorem \ref{Titchmarsh2}) we get $\Supp (b*a)>0$. We have a contradiction. On the other hand, if $\supp(b*a)\subset\mathbb{R}_0^n$, then $\Supp a=-p-c$. One more time we have a contradiction.
\end{proof}

The second required lemma is devoted to finding information about the support of the measures of a specific long-sum form.

\begin{lemma}
\label{lemat2}
Let $a$ and $b$ be two non-degenerate finite signed Borel measures such that
$$\Supp a=r \mbox{ and } \Supp b=r.$$
for some positive number $r$ and $C\in\mathcal{C}$. Let us also assume that for every $k=1, 2, 3, ...$ we have
\begin{equation}
\label{rownanie7}
a^{*_k}(A)=b^{*_k}(A),
\end{equation}
for every measurable set $A\subset\mathbb{R}^n\setminus C$, and restrictions of measures $a$ and $b$ to the set $\mathbb{R}^n\setminus C$ are non-zero measures.
Then for every natural number $k$ we have
\begin{equation}
\label{wsm}
\Supp\left(a^{*k}+a^{*(k-1)}*b+a^{*(k-2)}*b^{*2}+...+a*b^{*(k-1)}+b^{*k}\right)=k\cdot r.
\end{equation}
\end{lemma}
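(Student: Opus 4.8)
The plan is to use the hypothesis \eqref{rownanie7} only at the level $k=1$, where it forces $a$ and $b$ to agree outside $C$, and then to show that the resulting common part already produces all of the ``boundary behaviour'' of the sum in \eqref{wsm}. Since \eqref{rownanie7} with $k=1$ says $a(A)=b(A)$ for every measurable $A\subset\mathbb{R}^n\setminus C$, the measure $\rho:=a\vert_{\mathbb{R}^n\setminus C}=b\vert_{\mathbb{R}^n\setminus C}$ is well defined and, by assumption, non-zero; put $\alpha:=a\vert_{C}$ and $\beta:=b\vert_{C}$, so that $a=\rho+\alpha$, $b=\rho+\beta$ with $\supp\alpha,\supp\beta\subset C=C(0)$, hence $\Supp\alpha\le 0$ and $\Supp\beta\le 0$ (if $\alpha$ or $\beta$ vanishes, the terms it contributes below simply disappear). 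I would then check that $\Supp\rho=r$: the inclusion $\supp\rho\subset\supp a\subset C(r)$ gives $\Supp\rho\le r$, while conversely, since $C(0)\subset C(r-\varepsilon)$ for every $\varepsilon\in(0,r)$, any point of $\supp a$ lying outside $C(r-\varepsilon)$ --- and such points exist because $\Supp a=r$ --- must lie outside $C$, hence in $\supp\rho$, so $\supp\rho\not\subset C(r-\varepsilon)$ for all such $\varepsilon$.

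Next I would substitute $a=\rho+\alpha$ and $b=\rho+\beta$ into each of the $k+1$ summands $a^{*j}*b^{*(k-j)}$ ($j=0,\dots,k$) of the sum $S_k$ appearing in \eqref{wsm} and expand using the bilinearity and commutativity of convolution. Each such summand equals $\rho^{*k}$ plus a finite sum of convolution products of copies of $\rho,\alpha,\beta$ in which $\rho$ occurs at most $k-1$ times; by the additivity of $\Supp$ under convolution (Theorem \ref{Titchmarsh2}), every one of those extra products that does not vanish has $\Supp\le (k-1)r$, and hence support contained in $C((k-1)r)$. Summing the $k+1$ summands we obtain
$$S_k=(k+1)\,\rho^{*k}+R_k,\qquad \supp R_k\subset C\bigl((k-1)r\bigr).$$
Now Theorem \ref{Titchmarsh2} gives $\Supp\bigl((k+1)\rho^{*k}\bigr)=\Supp(\rho^{*k})=k\,\Supp\rho=k r$, so $(k+1)\rho^{*k}$ carries mass inside $C(kr)$ arbitrarily close to the conical surface of $C(kr)$; since $R_k$ is supported inside $C((k-1)r)\subset C(kr-\varepsilon)$ for every $\varepsilon\in(0,r)$, it cannot cancel that mass, and a short argument then yields $\Supp S_k=kr$, which is \eqref{wsm}.

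I do not expect a serious obstacle. The two points needing a little care are the identity $\Supp\rho=r$ --- that is, making precise that the portion of $a$ accumulating near the conical surface of $C(r)$ is necessarily located outside $C$ --- and the elementary fact that adding to a measure with $\Supp=s$ a measure supported in $C(s')$ with $s'<s$ does not change the value of $\Supp$; both reduce to the monotonicity $C(t_1)\subset C(t_2)$ for $t_1\le t_2$. The remaining care is bookkeeping: one must permit $\alpha$, $\beta$, or $R_k$ to be the zero measure, but each such degeneracy only makes the argument shorter.
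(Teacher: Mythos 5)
Your proof is correct, but it takes a genuinely different route from the paper's. The paper establishes \eqref{wsm} by an iterative rewriting: it anchors the value via Theorem \ref{Titchmarsh2} and $\Supp(ka+b)=\Supp a$, writing $k\cdot r=\Supp\left(a^{*(k-1)}*(ka+b)\right)$, and then trades one copy of $a^{*j}$ for $b^{*j}$ at each step; each exchange uses hypothesis \eqref{rownanie7} at level $j$, since $a^{*j}-b^{*j}$ is supported in $C$ and therefore perturbs the measure only strictly inside $C(kr)$. You instead split $a=\rho+\alpha$, $b=\rho+\beta$ along the boundary of $C$ and show that the whole sum equals $(k+1)\rho^{*k}$ plus a remainder supported in $C((k-1)r)$, so that all of the boundary mass of $C(kr)$ comes from $\rho^{*k}$. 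A notable consequence is that your argument invokes \eqref{rownanie7} only for $k=1$: you prove a slightly stronger lemma, needing only that $a=b$ outside $C$, whereas the paper consumes the entire family of hypotheses. What the paper's version buys in exchange is that it never has to introduce restricted measures or verify $\Supp\rho=r$; it stays entirely at the level of whole measures and their convolution powers. Both arguments ultimately rest on the same two pillars: the additivity of $\Supp$ from Theorem \ref{Titchmarsh2}, and the elementary fact that adding a measure supported in $C(s')$ with $s'<s$ to a measure with $\Supp=s$ does not change $\Supp$. The two points you flag as requiring care ($\Supp\rho=r$, and the non-cancellation of boundary mass by $R_k$) do go through exactly as you indicate, using that $\mathbb{R}^n\setminus C$ and $\mathbb{R}^n\setminus C(kr-\varepsilon)$ are open; note also that Theorem \ref{Titchmarsh2} already guarantees that none of the extra products vanish when their factors are non-zero, so the caveat about vanishing terms is needed only for the degenerate cases $\alpha=0$ or $\beta=0$, which you handle.
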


\begin{proof}
We need to estimate the value
$$\Supp\left(a^{*k}+a^{*(k-1)}*b+a^{*(k-2)}*b^{*2}+...+a*b^{*(k-1)}+b^{*k}\right).$$
For $k=1$ measures $a$ and $b$ are equal on the set $\mathbb{R}^n\setminus C$ by assumption; thus, we have
\begin{equation*}
\Supp(a+b)=\Supp(2a)=r.
\end{equation*}
By the same argument we get equality 
\begin{equation}
\label{sumar} 
\Supp (ra+b)=\Supp a,
\end{equation}
for natural $r$.

For $k=2$ we have, by Theorem \ref{Titchmarsh2} and above equality \eqref{sumar},
\begin{equation*}
2\cdot r
=\Supp\left(a*a\right)
=\Supp\left(a*\left(2a+b\right)\right).
\end{equation*}
Moreover,
\begin{equation*}
\begin{split}
\Supp\left(a*\left(2a+b\right)\right)
&=\Supp\left(2a^{*2}+a*b\right) \\
&=\Supp\left(a^{*2}+a*b+b^{*2}\right).
\end{split}
\end{equation*}
Above, we used assumption \eqref{rownanie7}, that is the equality of measures $a^{*2}$ and $b^{*2}$ on the set $C(2r)\setminus C$.

In the same way, we can prove equation \eqref{wsm} for every natural number $k$.
Let $k$ be a natural number. Then, by Theorem \ref{Titchmarsh2} and equality \eqref{sumar}, we have 
\begin{equation*}
k\cdot r
=\Supp\left(a^{*(k-1)}*a\right)
=\Supp\left(a^{*(k-1)}*\left(ka+b\right)\right).
\end{equation*}
Furthermore,
\begin{equation*}
\begin{split}
\Supp\left(a^{*(k-1)}*\left(ka+b\right)\right)
&=\Supp\left(a^{*(k-2)}*\left(ka^{*2}+a*b\right)\right) \\
&=\Supp\left(a^{*(k-2)}*\left((k-1)a^{*2}+a*b+b^{*2}\right)\right).
\end{split}
\end{equation*}
Above, we used assumption \eqref{rownanie7}, that is the equality of measures $a^{*2}$ and $b^{*2}$ on the set $C(2r)\setminus C$. Changing the above expression step by step in the same way and using the equality of successive convolution powers of measures $a$ and $b$ (assumption \eqref{rownanie7}), we finally get
\begin{equation*}
\begin{split}
\Supp&\left(a^{*(k-1)}*\left(ka+b\right)\right) \\
&=... \\
&=\Supp\left(a^{*2}*\left(4a^{*(k-2)}+a^{*(k-3)}*b+...+a*b^{*(k-3)}\right)\right) \\
&=\Supp\left(a^{*2}*\left(3a^{*(k-2)}+a^{*(k-3)}*b+...+a*b^{*(k-3)}+b^{*(k-2)}\right)\right) \\
&=\Supp\left(a*\left(3a^{*(k-1)}+a^{*(k-2)}*b+...+a*b^{*(k-2)}\right)\right) \\
&=\Supp\left(a*\left(2a^{*(k-1)}+a^{*(k-2)}*b+...+a*b^{*(k-2)}+b^{*(k-1)}\right)\right) \\
&=\Supp\left(2a^{*k}+a^{*(k-1)}*b+...+a^{*2}*b^{*(k-2)}+a*b^{*(k-1)}\right) \\
&=\Supp\left(a^{*k}+a^{*(k-1)}*b+...+a^{*2}*b^{*(k-2)}+a*b^{*(k-1)}+b^{*k}\right).
\end{split}
\end{equation*}
\end{proof}

Now, we can turn to the proof of Theorem \ref{Maintheorem}.

\section{Proof of the main theorem}

With all facts concerned in Section 2 we can deal with the most important statement of this paper.

\begin{proof}[Proof of Theorem \ref{Maintheorem}]

Let $C\in\mathcal{C}$ and let $\mu$ and $\nu$ be two signed Borel measures described by the assumptions of the theorem. Let us also assume that $\mu\neq\nu$.

We know that
$$\mu*\mu(A)=\nu*\nu(A)$$
for every measurable set $A\subset\mathbb{R}^n\setminus C$. Therefore, for such a set $A$ we have
\begin{equation}
\label{skroconemnozenie}
0=\left(\mu*\mu-\nu*\nu\right)(A)=\left((\mu+\nu)*(\mu-\nu)\right)(A),
\end{equation}
by commutativity of convolution. Moreover,
$$\supp (\mu-\nu), \supp (\mu*\mu-\nu*\nu) \subset C \subset\mathbb{R}_0^n,$$
by equality \eqref{rownanie1} for $k=1$ and for $k=2$.

Let
$$-p=\Supp (\mu-\nu),$$
for some $p\geq0$.
We will consider two cases: $p=0$ and $p>0$.\newline

In the case $p=0$, by Lemma \ref{lemat1} (take $a=\mu-\nu$ and $b=\mu+\nu$) and equation \eqref{skroconemnozenie}, we get
$$\supp (\mu+\nu)\subset C.$$
This could not occur because
$$\supp (2\mu)=\supp\left((\mu+\nu)+(\mu-\nu)\right)\subset C,$$
and we assumed that the restriction of the measure $\mu$ to the set $\mathbb{R}^n\setminus C$ is a non-zero measure.\newline

In the case $p>0$, by Lemma \ref{lemat1} ($a$ and $b$ as above), equation \eqref{skroconemnozenie} and assumption \eqref{rownanie1}, we have
$$\Supp \mu=r \,\mbox{ and }\, \Supp\nu=r,$$
for some $0<r<p$, and also
$$\mu=\nu \mbox{ on the set } 
C(r)\setminus C(-p).$$
Moreover, by Theorem \ref{Titchmarsh2} we have
$$\Supp \mu^{*k}=k\cdot r \,\mbox{ and }\,  \Supp \nu^{*k}= k\cdot r,$$
for $k= 1, 2, 3, ...$ .

Now, by assumption we know that 
$$\left(\mu^{*k}-\nu^{*k}\right)(A)=
\left(\mu-\nu\right)
*
\left(\mu^{*(k-1)}+\mu^{*(k-2)}\nu+...+\nu^{*(k-1)}\right)(A)=0,$$
for every set $A\subset\mathbb{R}^n\setminus C$ and natural number $k$. 
Notice that 
$$\Supp (\mu-\nu)=-p,$$
and, by Lemma \ref{lemat2}, for every natural number $k$ we have
$$\Supp \left(\mu^{*(k-1)}+\mu^{*(k-2)}\nu+\mu^{*(k-3)}\nu^{*2}+...+\nu^{*(k-1)}\right)
=(k-1)\cdot r.$$ Thus, for a sufficiently big natural number $k$ we get
$$\Supp
\left[\left(\mu-\nu\right)
*
\left(\mu^{*(k-1)}+\mu^{*(k-2)}\nu+\mu^{*(k-3)}\nu^{*2}+...+\nu^{*(k-1)}\right)\right]>0,$$
by Theorem \ref{Titchmarsh2}. However, this means that $\Supp (\mu^{*n}-\nu^{*n})>0$.
We have a contradiction.
\end{proof}

Despite all the attempts, the author could not drop the assumption saying that two measures $\mu$ and $\nu$ are supported in a common cone $C(h)$, $h>0$, from Theorem \ref{Maintheorem}. The question is whether this assumption is, indeed, required.

\section*{Acknowledgement}
The author would like to express his gratitude to Rafał Latała and Krzysztof Oleszkiewicz for interesting discussion during the seminar of probability theory which shed new light on the shape of the article. I am also grateful to Mateusz Wasilewski for his numerous comments on the early version of this article.

\printbibliography

\end{document}